\newtheorem{theorem}{Theorem}
\newtheorem{thm}{Theorem}
\newtheorem{defn}{Definition}[section]
\begin{document}

\title{Derivations and identities for Chebyshev polynomials of  the first and second kinds}           
{}                 

\author{Leonid Bedratyuk}             
\address{Khmelnytskyi National University, 11, Instytuts'ka st., 29016, Khmelnytskyi, Ukraine \\ \\ 
 leonid.uk@gmail.com}                  
        
\author{Nataliia Luno}            
\address{Vasyl' Stus Donetsk National University, 600-richchia, 21st., 21021, Vinnytsia, Ukraine \\
nlunio@ukr.net}

\keywords{Chebyshev polynomials, derivations, generalised hypergeometric function.}               

\begin{abstract}
In this note we follow the general approach, proposed by the first author, which is derived from the invariant theory field and provides a way of obtaining of the polynomial identities for any arbitrary polynomial family.  
We introduce the notion of Chebyshev derivations of the first and second kinds, which is  based on the polynomial algebra, and corresponding specific differential operators. We derive the elements of their kernels and prove that any element of the kernel of the derivations  defines a polynomial identity satisfied by the Chebyshev polynomials of the first and second kinds.  Combining elementary methods and combinatorial techniques, we obtain  several new polynomial identities involving the Chebyshev polynomials of the both kinds and a special case of the Jacobi polynomials.
Using the properties of the generalised hypergeometric function, we specify  the Chebyshev polynomials of the first and second kinds  via  the generalised hypergeometric function and, as a consequence, derive the corresponding identities involving the generalised hypergeometric function and the Chebyshev polynomials of the first and second kinds.
\end{abstract}

\maketitle
\section{Introduction}

The Chebyshev polynomials of the first kind    $T_n(x)$  and the Chebyshev polynomials of the second kind  $U_n(x)$   are defined by the following ordinary generating functions
\begin{gather*}
\mathcal{G}(T_n(x),t)=\frac{1-xt }{1-2xt+t^2}=\sum_{n=0}^{\infty} T_n(x)  t^n,\\
 \mathcal{G}(U_n(x),t)=\frac{1}{1-2xt+t^2}=\sum_{n=0}^{\infty}U_n(x)  t^n. 
\end{gather*}

The derivatives of the polynomials can be expressed in terms of the polynomials as follows:

 \begin{align*}
 &\frac{d}{dx} T_n(x)=n \left( \sum_{k=1}^{n-1} \left(1-(-1)^k \right) T_{n-k}(x) +\frac{1-(-1)^n}{2}T_0(x) \right), \label{der1}\\
 &\frac{d}{dx} U_n(x)=\sum_{k=1}^{n} \left( 1-(-1)^{n-k} \right) (n-k+1)U_{n-k}(x)=\sum_{k=0}^{\left[\frac{n}{2} \right]}  (n-2k)U_{n-2k-1}(x),
 \end{align*}
  see  \cite{BI3}.

    We are interested in finding  polynomial identities satisfied by the  polynomials, i.e.,  identities  of the form
$$
P(T_0(x),T_1(x),\ldots, T_n(x))={\rm 0,} \,\,\text{  or }  P(U_0(x),U_1(x),\ldots, U_n(x))={\rm 0},
$$
where  $P(x_0,x_1,\ldots,x_n)$ is a polynomial of  $n+1$ variables.

We are going to use the approach proposed by  the first author in \cite{BI1}. We provide a method for finding such identities that is based on the simple observation:

if 
\begin{equation*}\label{eqn-zero}
\frac{d}{dx} P(T_0(x),T_1(x),\ldots, T_n(x))=0,
\end{equation*}
take place, then  
$
P(T_0(x),T_1(x),\ldots, T_n(x))
$
is constant.  In other words,
$P(T_0(x),T_1(x),\ldots, T_n(x))$ gives an identity of the Chebyshev polynomials of the first kind.

Now, we rewrite the derivative as:
\begin{gather*}
\frac{d}{dx} P(T_0(x),T_1(x),\ldots, T_n(x))=\\=\frac{\partial }{\partial x_0}P(x_0,x_1,\ldots,x_n)\Big |_{\{x_i=T_i(x)\}} \frac{d}{dx} T_0(x)+\cdots +\frac{\partial }{\partial x_n}P(x_0,x_1,\ldots,x_n)\Big |_{\{x_i=T_i(x)\}} \frac{d}{dx} T_n(x).
\end{gather*}
Then,
 
 \begin{gather*}
\frac{d}{dx} P(T_0(x),T_1(x),\ldots, T_n(x))=\\=\left( \frac{\partial }{\partial x_0}P(x_0,x_1,\ldots,x_n) \mathcal{D}_{\mathcal{T}}(x_0) +\cdots +\frac{\partial }{\partial x_n}T(x_0,x_1,\ldots,x_n)\mathcal{D}_{\mathcal{T}}(x_n)\right) \Big |_{\{x_i=T_i(x)\}} =\\=
\mathcal{D}_{\mathcal{T}}(P(x_0,x_1,\ldots,x_n))\Big |_{\{x_i=T_i(x)\}},
\end{gather*}
 where the differential operator  $\mathcal{D}_{\mathcal{T}}$ is defined by 
 
 $$
 \mathcal{D}_{\mathcal{T}}=n \left( \sum_{k=1}^{n-1} \left(1-(-1)^k \right) x_{n-k} +\frac{1-(-1)^n}{2}x_0 \right)
.
$$
 It is clear that if $\mathcal{D}_{\mathcal{T}}(P(x_0,x_1,\ldots,x_n))=0$ then $\dfrac{d}{dx} P(T_0(x),T_1(x),\ldots, T_n(x))$  is a constant.
 Thus, any non-trivial polynomial  $P(x_0,x_1,\ldots,x_n)$, which belongs to the kernel of   $\mathcal{D}_{\mathcal{T}},$ defines a polynomial identity relating the Chebyshev polynomials of the first kind. 
 
Similar construction for the Chebyshev polynomials of the second kind takes place. Let us introduce the differential operator 
 $$
  \mathcal{D}_{\mathcal{U}}(x_n)=\sum_{k=1}^{n-1} \left( 1+(-1)^{n-k+1} \right) (k+1)x_k(x).
 $$
Then   the condition $$ \mathcal{D}_{\mathcal{U}}(P(x_0,x_1,\ldots,x_n))=0,$$ defines a  polynomial  identity  
for  the Chebyshev polynomials of the second kind.

In the paper, using this approach, we  introduce the notion of Chebyshev derivations of the first and second kinds, derive the elements of their kernels and find the corresponding identities for the Chebyshev polynomials of both kinds:

\begin{multline*}
T_n(x)+\sum_{k=1}^{n} \frac {(-2)^kn}{k!}\left(  \sum_{i=0}^{\left[\frac{n-k}{2} \right]} (n-i-1)^{ \underline{k-1}} {k{+}i{-}1 \choose k{-}1}T_{n-k-2i}(x){T_{1}(x)}^k   -\right. \\
\left.- (1+(-1)^{n-k}) \frac{1}{4} {\left(\frac{n+k}{2}-1 \right)}^{ \underline{k-1}}{{ {\frac{n+k}{2}-1} \choose k-1} }{T_{1}(x)}^k   \right)=\cos \frac{\pi n}{2}.
\end{multline*}
	\begin{multline*}
U_n(x)+\sum_{k=1}^{n}\frac{{(-1)^{k}}}{k!} \sum_{i=0}^{\left[\frac{n-k}{2}\right]}  ( n-2i-k+1)(n-i)^{ \underline{k-1}}{k+i-1 \choose k-1} U_{n-k-2i}(x) {U_1(x)}^k =\\
=\cos \frac{\pi n}{2}.
\end{multline*}

\section{The Chebyshev derivations}\label{secondSec}

 Let   $\mathbb{C}[x_0,x_1,x_2,\ldots,x_n]$ be the polynomial algebra  in  $n+1$ variables $x_0,x_1,x_2,\ldots,x_n$ over $\mathbb{C}.$ Recall  that a  {\it derivation} of the polynomial algebra $\mathbb{C}[x_0,x_1,x_2,\ldots,x_n]$ is a linear  map  $D$ satisfying the Leibniz rule: 
$$
D(f \, g)=D(f) g+f D(g), \text{  for all }  f, g \in \mathbb{C}[x_0,x_1,x_2,\ldots,x_n].
$$
By  using the quotient rule of derivations we can extend a derivation to the field of
fractions $\mathbb{C}(x_0,x_1,x_2,\ldots,x_n)$.

A derivation $D$  is called {\it locally nilpotent} if for every $f \in \mathbb{C}[x_0,x_1,x_2,\ldots,x_n]$ there exists  an $n \in \mathbb{N}$ such that $D^{n+1}(f)=0$  but  $D^{n}(f)  \neq 0.$
Any derivation   $D$ is completely determined by the elements $D(x_i).$ A  derivation   $D$  is called  \textit{linear} if  $D(x_i)$ is a linear form. A  linear locally nilpotent derivation is called a \textit{Weitzenb\"ock derivation}. 
A derivation    $D$ 
is called a triangular if  $D(x_i) \in \mathbb{C}[x_0,\ldots,x_{i-1}] \forall i \leq n.$ Any triangular derivation is locally nilpotent.

The subalgebra 
$$
\ker D:=\left \{ f \in \mathbb{C}[x_0,x_1,x_2,\ldots,x_n] \mid  D(f)=0 \right \},
$$
is called the {\it kernel} of derivation $D.$

It is known \cite{BI7}, that if for an arbitrary locally nilpotent derivation $D$ there exists  polynomials $h$ such that  $D(h) \neq 0$ but $D^2(h)=0,$ then
$$
\ker D=\mathbb{C}[\sigma(x_0),\sigma(x_1),\ldots,\sigma(x_n)][D(h)^{-1}] \cap \mathbb{C}[x_0,x_1,\ldots,x_n],
$$
and the element
$$ \sigma_D(x_n)=\sum_{k=0}^{\infty} D^k(x_n) \frac{\lambda^k}{k!}, \: \small{\text{where} \:\lambda=-\frac{h}{D(h)}, D(\lambda)=-1}, \eqno{(3)}
$$ 
 belongs to the kernel of the derivation $\ker D$.

\begin{defn}
Elements $x_0^{n-1} \sigma(x_n)$, which belong to the kernel $\ker \mathcal{D}_{\mathcal{T}},$ we call   \textit{the Cayley elements of the Chebyshev first kind derivation} of the locally nilpotent derivation $\mathcal{D}_{\mathcal{T}}.$
\end{defn}

This motivates the following 
\begin{defn}
Derivations of  $\mathbb{C}[x_0,x_1,x_2,\ldots,x_n]$  defined by 
 \begin{align*}
&D_\mathcal{T}(x_0)=0, D_\mathcal{T}(x_n)=n \left( \sum_{k=1}^{n-1} \left(1-(-1)^k \right) x_{n-k} +\frac{1-(-1)^n}{2}x_0 \right),\\
&D_\mathcal{U}(x_0)=0, D_\mathcal{U}(x_n)=\sum_{k=1}^{n-1} \left( 1+(-1)^{n-k+1} \right) (k+1)x_k,
\end{align*}
are called  {\bf the Chebyshev first kind derivation} and {\bf the Chebyshev second kind derivation}  respectively.
\end{defn}
We  have
$$
\begin{array}{ll}
D_\mathcal{T}(x_0)=0,& D_\mathcal{U}(x_0)=0,\\
D_\mathcal{T}(x_1)=x_0,&D_\mathcal{U}(x_1)=2x_0,\\
 D_\mathcal{T}(x_2)=4x_1,& D_\mathcal{U}(x_2)=4x_1,\\
 D_\mathcal{T}(x_3)=6\,x_{{2}}+3\,x_{{0}},& D_\mathcal{U}(x_3)=2(3\,x_{{2}}+\,x_{{0}}),\\
 D_\mathcal{T}(x_4)=8\,x_{{3}}+8\,x_{{1}},& D_\mathcal{U}(x_4)=2(4\,x_{{3}}+2\,x_{{1}}),\\
 D_\mathcal{T}(x_5)=10\,x_{{4}}+10\,x_{{2}}+5\,x_{{0}},& D_\mathcal{U}(x_5)=2(5\,x_{{4}}+3\,x_{{2}}+\,x_{{0}}),\\
 D_\mathcal{T}(x_6)=12\,x_{{5}}+12\,x_{{3}}+12\,x_{{1}}, & D_\mathcal{U}(x_6)=2(6\,x_{{5}}+4\,x_{{3}}+2x_{{1}}),\\
 D_\mathcal{T}(x_7)=14\,x_{{6}}+14\,x_{{4}}+14\,x_{{2}}+7\,x_{{0}},& D_\mathcal{U}(x_7)=2(7\,x_{{6}}+5\,x_{{4}}+3\,x_{{2}}+\,x_{{0}}),\\
 D_\mathcal{T}(x_8)=16\,x_{{7}}+16\,x_{{5}}+16\,x_{{3}}+16x_{{1}}, & D_\mathcal{U}(x_8)=2(8\,x_{{7}}+6\,x_{{5}}+4x_{{3}}+2\,x_{{1}}).
\end{array}
$$

\section{The kernel of the Chebyshev  derivation  of the first kind}

As it was proved in \cite{BI2}, the $k$-th derivative of Chebyshev polinomial of the first kind has the form

\begin{multline*}
D^k_\mathcal{T}(x_n)={2^k} \sum_{i=0}^{\left[\frac{n-k}{2} \right]}  (n-i-1)^{ \underline{k-1}}  {k+i-1 \choose k-1} x_{n-k-2i}-\\ -[[n-k\ \textbf{even} ]]{2^{k-1}} {\left(\frac{n+k}{2}-1 \right)}^{ \underline{k-1}} { {\frac{n+k}{2}-1} \choose k-1}x_0,
\end{multline*}

where notations of falling factorials $x^{ \underline{n}}=x(x-1)\cdot \cdots \cdot (x-n+1)$ and Iverson's symbol $[P]$ which is 1 if $P$ is true and 0 otherwise is used.

After eliminating the Iverson's symbol 
and falling factorials 
 we get 
\begin{multline*}
D^k_\mathcal{T}(x_n)={2^k} \sum_{i=0}^{\left[\frac{n-k}{2} \right]} (n-i-1)^{ \underline{k-1}}   {k+i-1 \choose k-1} x_{n-k-2i}-\\ -\frac{1}{2}  (1+(-1)^{n-k}) 2^{k-1}n {\left(\frac{n+k}{2}-1 \right)}^{ \underline{k-1}}  {\left(\frac{n-k}{2}\right)!} {{ {\frac{n+k}{2}-1} \choose k-1} } x_0.
\end{multline*}

Since  $D_\mathcal{T}\left(-\displaystyle  \frac{x_1}{x_0} \right)=-1, $  we put $\lambda=- \displaystyle  \frac{x_1}{x_0}.$ Now we may find the Diximier map:
\begin{multline*}
\sigma(x_n)=\sum_{k=0}^{n}D^k_\mathcal{T}(x_n) \frac{\lambda^k}{k!}=x_n +\left[ \sum_{k=1}^{n} \frac{\lambda^k}{k!} {2^k}  \sum_{i=0}^{\left[ \frac{n-k}{2} \right]}  (n-i-1)^{ \underline{k-1}}   {k+i-1 \choose k-1} x_{n-k-2i} \right.- \\ - \left. \frac{1}{2}  (1+(-1)^{n-k}) 2^{k-1}n {\left(\frac{n+k}{2}-1 \right)}^{ \underline{k-1}} {{ {\frac{n+k}{2}-1} \choose k-1} } x_0 \right]. 
\end{multline*}

Replacing  $\lambda$ by $-\displaystyle  \frac{x_1}{x_0} $, we obtain, after   simplifying:
\begin{multline*}
x_0^{n-1} \sigma(x_n)=  x_n x_0^{n-1}+\sum_{k=1}^{n} \frac{(-2)^kn}{k!} \left[ \sum_{i=0}^{\left[\frac{n-k}{2} \right]}(n-i-1)^{ \underline{k-1}}  {k+i-1 \choose k-1} x_{n-k-2i}x_1^k x_0^{n-1-k}
\right. -\\
- \left. n \frac {(1+(-1)^{n-k})}{4}{\left(\frac{n+k}{2}-1 \right)}^{ \underline{k-1}} {{ {\frac{n+k}{2}-1} \choose k-1} } x_1^{k}x_0^{n-k}  \right].
\end{multline*}

Thus, we prove the following
\begin{thm}\label{t1} The kernel of the derivation $D_\mathcal{T}$ generating by the Cayley elements of the Chebyshev first kind derivation has the following form:
\begin{multline*}
C_{\mathcal{T}}(x_0,x_1, \ldots, x_n)={x_n}x_0^{n-1}+\sum_{k=1}^{n} \frac {(-2)^kn}{k!}\left(  \sum_{i=0}^{\left[\frac{n-k}{2} \right]}  (n-i-1)^{ \underline{k-1}} {k{+}i{-}1 \choose k{-}1}x_{n-k-2i}x_{1}^k x_{0}^{n-1-k} -\right. \\
\left.-(1+(-1)^{n-k}) \frac{1}{4} {\left(\frac{n+k}{2}-1 \right)}^{ \underline{k-1}}{{ {\frac{n+k}{2}-1} \choose k-1} } x_{1}^k x_{0}^{n-k}  \right).
\end{multline*}
 
\end{thm}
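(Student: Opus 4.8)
The plan is to recognise $D_\mathcal{T}$ as a locally nilpotent derivation and then apply the general structural result recalled in equation~(3) to manufacture an explicit kernel element. First I would observe that, by its very definition, $D_\mathcal{T}(x_n)$ is a linear form in the strictly lower-indexed variables $x_0,\dots,x_{n-1}$; hence $D_\mathcal{T}$ is triangular and therefore locally nilpotent, so the machinery behind~(3) is available.

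Next I would locate the element $\lambda$ with $D_\mathcal{T}(\lambda)=-1$. Since $D_\mathcal{T}(x_0)=0$ and $D_\mathcal{T}(x_1)=x_0$, the quotient rule gives $D_\mathcal{T}(x_1/x_0)=D_\mathcal{T}(x_1)/x_0=x_0/x_0=1$, so $\lambda=-x_1/x_0$ does the job (this is the choice $h=x_1$, $D_\mathcal{T}(h)=x_0$ in the notation of~(3)). By that result the series
$$
\sigma(x_n)=\sum_{k\ge 0} D_\mathcal{T}^{\,k}(x_n)\frac{\lambda^k}{k!}
$$
lies in the kernel of $D_\mathcal{T}$ extended to the field of fractions. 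The series is in fact a finite sum: because $D_\mathcal{T}$ strictly lowers the variable index, $D_\mathcal{T}^{\,k}(x_n)=0$ for $k>n$ (already for $k=n$ the inner sum collapses to the single term $x_0$, which $D_\mathcal{T}$ then kills), so the sum truncates at $k=n$.

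The following step is purely computational: substitute into $\sigma(x_n)$ the closed form for the iterated derivative $D_\mathcal{T}^{\,k}(x_n)$ established in~\cite{BI2} and restated above, with its Iverson symbol already resolved into the factor $\tfrac12(1+(-1)^{n-k})$, and then replace $\lambda$ by $-x_1/x_0$. Each summand of $D_\mathcal{T}^{\,k}(x_n)$ carries a single variable $x_{n-k-2i}$ (or $x_0$), while $\lambda^k$ contributes $(-1)^k x_1^k x_0^{-k}$; collecting the sign $(-1)^k$ with the $2^k$ already present produces the factor $(-2)^k$ appearing in the statement, and the $1/k!$ from the series produces the displayed $\tfrac{(-2)^k n}{k!}$.

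Finally I would clear denominators. Since $x_0\in\ker D_\mathcal{T}$ and the kernel is a subalgebra closed under multiplication, $x_0^{n-1}\sigma(x_n)$ again lies in the kernel; multiplying through by $x_0^{n-1}$ turns $x_{n-k-2i}x_1^k x_0^{-k}$ into $x_{n-k-2i}x_1^k x_0^{n-1-k}$ and the $x_0$-term into $x_1^k x_0^{n-k}$, so every negative power of $x_0$ disappears and the result is a genuine polynomial, namely $C_{\mathcal{T}}(x_0,\dots,x_n)$. The main obstacle here is not conceptual but a matter of bookkeeping: one must check that the powers of $x_0$ cancel exactly, so that $x_0^{n-1}\sigma(x_n)$ is polynomial rather than merely rational, and that the falling factorials $(n-i-1)^{\underline{k-1}}$, $\left(\tfrac{n+k}{2}-1\right)^{\underline{k-1}}$ and the accompanying binomial coefficients are transported through the substitution unchanged. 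Once this accounting is verified, comparison with the simplified expression for $x_0^{n-1}\sigma(x_n)$ yields the asserted form of $C_{\mathcal{T}}$ and completes the proof.
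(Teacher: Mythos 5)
Your proposal is correct and takes essentially the same route as the paper: identify $\lambda=-x_1/x_0$ with $D_\mathcal{T}(\lambda)=-1$, form the Dixmier map $\sigma(x_n)=\sum_{k=0}^{n}D_\mathcal{T}^{k}(x_n)\lambda^{k}/k!$ using the closed form of $D_\mathcal{T}^{k}(x_n)$ from \cite{BI2}, and multiply by $x_0^{n-1}$ to clear denominators and obtain the Cayley element. If anything, you supply justifications the paper leaves implicit (triangularity hence local nilpotence, truncation of the series at $k=n$, and why $x_0^{n-1}\sigma(x_n)$ stays in the kernel and is a genuine polynomial).
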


The first few Cayley elements are:
\begin{align*}
&C_\mathcal{T}(x_0,x_1, x_2)=-2{x_{{1}}}^{2}+x_{{2}}x_{{0}},\\
&C_\mathcal{T}(x_0,x_1, x_2,x_3)=8\,{x_{{1}}}^{3}-6\,{x_{{1}}x_{{2}}x_{{0}}}-3\,{x_{{0}}}^{2}x_{{1}}+x_{{3}}
{x_{{0}}}^{2},\\
&C_\mathcal{T}(x_0,x_1, x_2,x_3,x_4)=-24\,{x_{{1}}}^{4}+24\,{x_{{1}}}^{2}x_{{2}}x_{{0}}+8{x_{{0}}}^{2}{x_{{1}}
}^{2}-8\,x_{{1}}x_{{3}}{x_{{0}}}^{2}+x_{{4}}{x_{{0}}}^{3},\\
&C_\mathcal{T}(x_0,x_1, x_2,x_3,x_4,x_5)=64\,{x_{{1}}}^{5}-80\,{x_{{1}}}^{3}x_{{2}}x_{{0}}+
40\,{x_{{1}}}^{2}x_{{3}}{x_{{0}}}^{2}-10\,x_{{1}}x_{{4}}{x_{{0
}}}^{3}-10\,{x_{{0}}}^{3}x_{{1}}x_{{2}}-\\&-5\,{x_{{0}}}^{4}x_{{1}}+x_{{5}}{x_
{{0}}}^{4}.
\end{align*} 

Going back to the Chebyshev polynomials  we get  the following statement.

\begin{theorem}  

For any arbitrary $n \in \mathbb{N},$ the following identities hold
$$
\begin{array}{ll}
(i)& \displaystyle 
T_n(x)+n \sum_{k=1}^{n}  (-2 T_{1}(x))^k \left(  \sum_{i=0}^{\left[\frac{n-k}{2} \right]} \frac{1}{n-i} \binom{n-i}{k} {k{+}i{-}1 \choose k{-}1}T_{n-k-2i}(x) \right)   = \\=& \displaystyle 
n \sum_{k=1}^{n} \left((1+(-1)^{n-k}) \frac{1}{4} {\left(\frac{n+k}{2}-1 \right)}^{ \underline{k-1}}{{ {\frac{n+k}{2}-1} \choose k-1} }{T_{1}(x)}^k   \right)+\cos \frac{\pi n}{2},\\
(ii)& \displaystyle 
\sum_{k=0}^n \left( \sum_{i=0}^{\left[\frac{n-k}{2} \right]} {(-2)^{n-(k+2i)}}  \frac{1}{n-i} \binom{n-i}{k+i} {n-k-i-1 \choose i} T_1(x)^{n-(k+2i)} \right) T_k(x)=\\&=\displaystyle 
\sum_{k=0}^n (1+(-1)^{n-k})  \frac{k (-2)^{k} }{(n+k)^2} \binom{\frac{n+k}{2}}{k}^2 T_1(x)^k +\frac{1}{n}\cos \frac{\pi n}{2},\\
(iii) & \displaystyle 
{P^{\left(-\frac{1}{2},-\frac{1}{2}\right)}_{n}(x)}+\sum_{k=1}^{n} \frac {(-x)^k (n+k-1)!}{ k! 2^k(n-1)!}   {P^{\left(k-\frac{1}{2},k-\frac{1}{2}\right)}_{n-k}(x)}  =  {\left(\frac{1}{2}
\right)}_{ n} \frac{1}{n!}\cos \frac{\pi n}{2},
\end{array}
$$
$$
\text{ where} \:{P^{\left(\alpha,\beta \right)}_{n}(x)}\text {is} \:\textit{the Jacobi} \:\textit{polynomial}\: such \: that  \:deg P^{\left(\alpha,\beta \right)}_{n}(x)=n.
$$
\end{theorem}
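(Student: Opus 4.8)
The plan is to exploit the fact, established in Theorem~\ref{t1}, that $C_{\mathcal{T}}(x_0,\dots,x_n)$ lies in $\ker D_{\mathcal{T}}$, together with the correspondence between $D_{\mathcal{T}}$ and $\tfrac{d}{dx}$ set up in the Introduction. First I would substitute $x_i=T_i(x)$ into $C_{\mathcal{T}}$; since $D_{\mathcal{T}}(C_{\mathcal{T}})=0$ translates into $\tfrac{d}{dx}C_{\mathcal{T}}(T_0(x),\dots,T_n(x))=0$, the resulting polynomial in $x$ is a constant. To pin down that constant I would evaluate at $x=0$, using $T_0(0)=1$ and $T_1(0)=0$: every summand carrying a positive power of $x_1$ dies and only the leading term $x_n x_0^{n-1}$ survives, giving $T_n(0)=\cos\frac{\pi n}{2}$. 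This already yields the master identity $\sum_{k=0}^{n}\frac{(-x)^k}{k!}\,\frac{d^k}{dx^k}T_n(x)=\cos\frac{\pi n}{2}$, which is nothing but the (exact, since $\deg T_n=n$) Taylor expansion of $T_n$ at the shift $x\mapsto 0$; the three identities (i)--(iii) are then three ways of writing out the derivatives $\frac{d^k}{dx^k}T_n(x)$.

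For (i) I would use $T_0(x)=1$ to delete all powers of $x_0$ in $C_{\mathcal{T}}(T_0(x),\dots,T_n(x))$ and rewrite the coefficient via the elementary identity $\frac{n}{k!}(n-i-1)^{\underline{k-1}}=\frac{n}{n-i}\binom{n-i}{k}$, which follows from $(n-i-1)^{\underline{k-1}}=\frac{(n-i-1)!}{(n-i-k)!}$. Collecting the $(-2)^k$ and $T_1(x)^k$ into $(-2T_1(x))^k$ and transferring the parity (Iverson) block to the right-hand side gives the stated form.

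For (ii) I would start from (i), divide through by $n$, and reindex the double sum so that the outer variable is the Chebyshev degree $k$ of $T_k(x)$ rather than the power of $T_1(x)$. Writing the old power of $T_1$ as $k'=n-k-2i$ with the inner index $i$ unchanged, the two required binomial coincidences are $\binom{n-i}{n-k-2i}=\binom{n-i}{k+i}$ and $\binom{n-k-2i+i-1}{n-k-2i-1}=\binom{n-k-i-1}{i}$, both instances of $\binom{m}{j}=\binom{m}{m-j}$. For the parity block on the right I would verify the single closed-form identity $\frac{1}{k!}\,\frac14\left(\frac{n+k}{2}-1\right)^{\underline{k-1}}\binom{\frac{n+k}{2}-1}{k-1}=\frac{k}{(n+k)^2}\binom{\frac{n+k}{2}}{k}^{2}$, which, after writing $N=\frac{n+k}{2}$ and cancelling the common factor $\frac14\,((N-1)!)^2/((N-k)!)^2$, reduces to the triviality $k!=k\,(k-1)!$.

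For (iii) I would pass from Chebyshev to Jacobi polynomials through the normalisation $T_n(x)=\frac{n!}{\left(\frac12\right)_n}P_n^{(-1/2,-1/2)}(x)$ and the classical differentiation formula $\frac{d^k}{dx^k}P_n^{(-1/2,-1/2)}(x)=\frac{(n+k-1)!}{2^k(n-1)!}P_{n-k}^{(k-1/2,k-1/2)}(x)$, the specialisation of $\frac{d^k}{dx^k}P_n^{(\alpha,\beta)}=\frac{\Gamma(n+\alpha+\beta+1+k)}{2^k\Gamma(n+\alpha+\beta+1)}P_{n-k}^{(\alpha+k,\beta+k)}$ at $\alpha=\beta=-\tfrac12$. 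Inserting these into the master identity and multiplying by $\frac{\left(\frac12\right)_n}{n!}$, together with $P_n^{(-1/2,-1/2)}(0)=\frac{\left(\frac12\right)_n}{n!}\cos\frac{\pi n}{2}$, produces (iii) verbatim. The conceptual steps—kernel membership and the evaluation at $x=0$—are immediate from Theorem~\ref{t1}, so the only genuine work is bookkeeping; the main obstacle I anticipate is the reindexing in (ii), namely getting the summation ranges and the two binomial flips simultaneously correct, rather than any real difficulty.
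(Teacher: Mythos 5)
Your proposal is correct and is essentially the paper's own proof: kernel membership of the Cayley element from Theorem~\ref{t1} gives constancy after the substitution $x_i=T_i(x)$, evaluation at $x=0$ (using $T_1(0)=0$) identifies the constant as $T_n(0)=\cos\frac{\pi n}{2}$, part (ii) follows by the identical reindexing $\sum_{k}\sum_{i}a_{k,i}x_{n-k-2i}=\sum_{k}\bigl(\sum_{i}a_{n-(k+2i),i}\bigr)x_k$ together with binomial symmetry, and part (iii) by the same Jacobi specialisation and differentiation formula. The only divergences are cosmetic: you make the underlying Taylor-expansion interpretation explicit, and your parity-block computation correctly retains the factor $(-2)^k/k!$ which the displayed right-hand side of identity (i) in the theorem drops (evidently a typo in the statement, since both the paper's own proof and its part (ii) keep that factor).
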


\begin{proof}
$(i)$ By Theorem~\ref{t1} we have \begin{multline*}
T_n(x)+\sum_{k=1}^{n} \frac {(-2)^kn}{k!}\left(  \sum_{i=0}^{\left[\frac{n-k}{2} \right]} (n-i-1)^{ \underline{k-1}} {k{+}i{-}1 \choose k{-}1}T_{n-k-2i}(x){T_{1}(x)}^k   -\right. \\
\left.-  (1+(-1)^{n-k}) \frac{1}{4} {\left(\frac{n+k}{2}-1 \right)}^{ \underline{k-1}}{{ {\frac{n+k}{2}-1} \choose k-1} }{T_{1}(x)}^k   \right)=t,
\end{multline*}
for some constant $t$.

As far, as the left side of the latter identity does not depend on $x,$ to find the unknown constant $t$ we put $x=0,$ then

\begin{multline*}
T_n(0)+\sum_{k=1}^{n} \frac {(-2)^kn}{k!}\left(  \sum_{i=0}^{\left[\frac{n-k}{2} \right]} (n-i-1)^{ \underline{k-1}} {k{+}i{-}1 \choose k{-}1}T_{n-k-2i}(0){T_{1}(0)}^k   -\right. \\
\left.- (1+(-1)^{n-k}) \frac{1}{4} {\left(\frac{n+k}{2}-1 \right)}^{ \underline{k-1}}{{ {\frac{n+k}{2}-1} \choose k-1} }{T_{1}(0)}^k   \right)=t.
\end{multline*}
Since $T_1(x)=x$, then $T_1(0)=0$ and we have $t=T_n(0).$ 
Using the explicit form of the Chebyshev first kind polynomials 
$$
T_n(x)=\sum_{k=0}^{\frac{n}{2}} \binom{n}{2k} (x^2-1)^k x^{n-2k},
$$
we find that  $T_n(0)=0$  for the odd $n,$ and $T_n(0)=(-1)^{\frac{n}{2}}$ for the even $n.$  Combining the both cases, we obtain
$$
t=T_n(0)=\cos \frac{\pi n}{2}.
$$

$(ii)$  Taking into account properties of non-integers binomial coefficients and their connection with falling factorials, we have
\begin{multline*}
\binom{\frac{n+k}{2}-1}{k-1}=\frac{\Gamma \left(\frac{n+k}{2}\right)}{\Gamma \left(\frac{n-k}{2}+1\right)(k-1)!}=\frac{\Gamma \left(\frac{n+k}{2}\right)\left(\frac{n+k}{2}\right)}{\Gamma \left(\frac{n-k}{2}+1\right)(k-1)!k}\frac{k}{\frac{n+k}{2}}=\\
=\frac{\Gamma \left(\frac{n+k}{2}+1\right)}{\Gamma \left(\frac{n-k}{2}+1\right)k!}\frac{k}{\frac{n+k}{2}}=\frac{2k}{n+k}\binom{\frac{n+k}{2}}{k},
\end{multline*}
$$
{\left(\frac{n+k}{2}-1 \right)}^{ \underline{k-1}}=(k-1)!\binom{\frac{n+k}{2}-1}{k-1}=
\frac{2k!}{n+k}\binom{\frac{n+k}{2}}{k}.
$$

Note that after changing the order of summation by the general rule \cite{BI8}
\begin{gather*}
\sum_{k=0}^n \sum_{i=0}^{\left[\frac{n-k}{2} \right]} a_{k,i} x_{n-k-2i} =\sum_{k=0}^n \left( \sum_{i=0}^{\left[\frac{n-k}{2} \right]} a_{n-(k+2i),i} \right) x_k,
\end{gather*} 
and using the symmetry of binomial coefficients, the identity $(i)$ can be rewritten as  
\begin{gather*}\label{1}
\sum_{k=0}^n \left( \sum_{i=0}^{\left[\frac{n-k}{2} \right]} {(-2)^{n-(k+2i)}}  \frac{1}{n-i} \binom{n-i}{k+i} {n-k-i-1 \choose i} T_1(x)^{n-(k+2i)} \right) T_k(x)-\\-
\sum_{k=0}^n (1+(-1)^{n-k})  \frac{k (-2)^{k} }{(n+k)^2} \binom{\frac{n+k}{2}}{k}^2 T_1(x)^k =\frac{1}{n}\cos \frac{\pi n}{2}.
\end{gather*}

$(iii)$  Using  the formula for the $k$-th Jacobi polynomial derivative \cite{BI5}

$$
{\displaystyle {\frac {d^{k}}{dz^{k}}}P_{n}^{(\alpha ,\beta )}(z)={\frac {\Gamma (\alpha +\beta +n+1+k)}{2^{k}\Gamma (\alpha +\beta +n+1)}}P_{n-k}^{(\alpha +k,\beta +k)}(z)},
$$

and the fact that the Chebyshev polynomials of the first kind are a special case of the Jacobi polynomials \cite{BI4} $P_n^{(\alpha,\beta)}$ with $\alpha=\beta=-1/2$,
$$
T_n(x)=\frac{P^{\left(-\frac{1}{2},-\frac{1}{2}\right)}_n(x)}{P^{\left(-\frac{1}{2},-\frac{1}{2}\right)}_n(1)}=n! \frac{P^{\left(-\frac{1}{2},-\frac{1}{2}\right)}_n(x)}{(\frac{1}{2})_n},
$$

we obtain the $k$-th Chebyshev polynomial derivative in the terms of the Jacobi  polynomials

$$
\frac{d^k}{dx^k} T_n(x)=\frac{n!}{(\frac{1}{2})_n} \frac{(n+k-1)!}{2^k(n-1)!}
{P^{\left(k-\frac{1}{2},k-\frac{1}{2}\right)}_{n-k}(x)}.
$$

Now, with $T_1(x)=x,$ we can write the identity for the special case of the Jacobi polynomials
\begin{gather*}
 {P^{\left(-\frac{1}{2},-\frac{1}{2}\right)}_{n}(x)}+\sum_{k=1}^{n} \frac {(-x)^k (n+k-1)!}{ k! 2^k(n-1)!}   {P^{\left(k-\frac{1}{2},k-\frac{1}{2}\right)}_{n-k}(x)}  =  {\left(\frac{1}{2}
\right)}_{ n} \frac{1}{n!}\cos \frac{\pi n}{2}.
\end{gather*}
\end{proof}

After rewriting the $(ii)$ identity in the terms of hypergeometric function, we obtain the following

\begin{thm}
\begin{gather*}
\sum_{k=0}^n \binom{n}{k}(-2x)^{n-k}\,{}_4 F_3
\begin{bmatrix}
\begin{matrix}
&\dfrac{{-}n{+}k}{2},\dfrac{-n+k}{2}{+}1,\dfrac{{-}n{+}k}{2}{+}\dfrac 12,\dfrac{-n{+}k}{2}{+}\dfrac 12\:\\
&{-}n{+}1,k{+}1,{-}n{+}1{+}k
\end{matrix} \:
\bigg{\vert \frac{4}{{x}^{2}}}
\end{bmatrix}
 T_k(x)=\\=
n \sum_{k=0}^n (1+(-1)^{n-k})  \frac{k (-2)^{k} }{(n+k)^2} \binom{\frac{n+k}{2}}{k}^2 x^k +\cos \frac{\pi n}{2}.
\end{gather*}
\end{thm}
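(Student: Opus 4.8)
The plan is to derive this identity directly from part $(ii)$ of the previous theorem by recognizing the inner $i$-sum as a terminating ${}_4F_3$ hypergeometric series. First I would multiply identity $(ii)$ through by $n$ and use $T_1(x)=x$; this turns its right-hand side into exactly $n\sum_{k}(1+(-1)^{n-k})\frac{k(-2)^k}{(n+k)^2}\binom{(n+k)/2}{k}^2 x^k+\cos\frac{\pi n}{2}$, which already matches the right-hand side of the statement. Hence it suffices to show, for each fixed $k$, that the coefficient of $T_k(x)$ on the left, multiplied by $n$, equals $\binom{n}{k}(-2x)^{n-k}$ times the advertised ${}_4F_3$ evaluated at $4/x^2$.

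Write $m=n-k$ and denote by $b_i$ the summand of the inner sum in $(ii)$. I would first evaluate the leading term: at $i=0$ the two binomials collapse and $\frac{1}{n-i}\binom{n-i}{k+i}$ simplifies to $\frac1n\binom{n}{k}$, giving $b_0=\frac1n\binom{n}{k}(-2x)^{n-k}$, so that $n\,b_0$ is precisely the required prefactor (and the constant term of a hypergeometric series is $1$). The heart of the argument is then the computation of the term ratio $b_{i+1}/b_i$. Rewriting both binomials as factorials and cancelling, one obtains a rational function of $i$ whose numerator is $(m-2i)(m-2i-1)^2(m-2i-2)$ and whose denominator is $4x^2(n-i-1)(n-k-i-1)(k+i+1)(i+1)$.

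The key manipulation is to normalise each linear factor to the Pochhammer form $i+\alpha$. Each factor $m-2i-j$ factors as $-2\bigl(i+\tfrac{j-m}{2}\bigr)$, producing the four numerator parameters $\frac{-n+k}{2}$, $\frac{-n+k}{2}+\frac12$, $\frac{-n+k}{2}+\frac12$, $\frac{-n+k}{2}+1$ together with a total scalar $(-2)^4=16$; the factors $n-i-1$ and $n-k-i-1$ become $-(i+(-n+1))$ and $-(i+(-n+1+k))$, contributing the denominator parameters $-n+1$, $-n+1+k$ and a sign $(-1)^2=1$, while $k+i+1$ and $i+1$ give the remaining denominator parameter $k+1$ and the structural factor $(i+1)$. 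Collecting the residual constant $\frac{16}{4x^2}=\frac{4}{x^2}$ shows that $b_{i+1}/b_i$ is exactly the term ratio of a ${}_4F_3$ with the stated parameters and argument $4/x^2$, so the inner sum equals $b_0\cdot{}_4F_3[\cdots\mid 4/x^2]$, and substituting back finishes the proof.

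I expect the bookkeeping of signs and half-integer shifts in the last step to be the only delicate point: one must track the four factors of $-2$ in the numerator against the two $-1$'s and the $4x^2$ in the denominator so that the argument emerges as $+4/x^2$ rather than with a spurious sign or power of $2$. A secondary point worth recording is that the series genuinely terminates at $i=\lfloor (n-k)/2\rfloor$: when $n-k$ is even the parameter $\frac{-n+k}{2}$ is a non-positive integer, and when $n-k$ is odd the repeated parameter $\frac{-n+k}{2}+\frac12$ is, so in either case the ${}_4F_3$ is a polynomial whose length matches the original finite $i$-sum, and the $k=n$ term reduces to $T_n(x)$ as required for consistency with $(ii)$.
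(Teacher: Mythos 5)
Your proposal is correct and takes essentially the same route as the paper: both proofs identify the inner $i$-sum of identity $(ii)$ as a terminating ${}_4F_3$ by computing the initial term $a_0=\tfrac{1}{n}\binom{n}{k}(-2x)^{n-k}$ and the term ratio $a_{i+1}/a_i$, then reading off the hypergeometric parameters and the argument $4/x^{2}$. Your explicit Pochhammer normalization of the linear factors and your remark on where the series terminates merely spell out steps the paper leaves implicit.
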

\begin{proof}

Denoting  the expression under the summation   sign  by $a_i$  
$$
a_i= {(-2)^{n-(k+2i)}}  \frac{1}{n-i} \binom{n-i}{k+i} {n-k-i-1 \choose i} x^{n-(k+2i)},
$$
and finding the ratio of $a_{i+1}$ and $a_i,$ after simplifying, we  obtain
\begin{gather*}
\frac{a_{i+1}}{a_{i}}={\frac { \left( k+2\,i+1-n \right) ^{2} \left( -n+k+2\,i \right) 
 \left( -n+k+2\,i+2 \right) }{ 4 \, \left( -n+i+1 \right)  \left( k+i+1
 \right)  \left( -n+k+i+1 \right)  \left( i+1 \right) {x}^{2}}},
\end{gather*}
with
$$
a_0=\dfrac {{n\choose k}}{n}(-2x)^{n-k}.
$$

Using the corresponding property of the generalized hypergeometric series \cite{BI6}, we have
\begin{gather*}
\sum_{i=0}^{\infty} {(-2)^{n-(k+2i)}}  \frac{1}{n-i} \binom{n-i}{k+i} {n-k-i-1 \choose i} x^{n-(k+2i)}=\\=
 \dfrac {{n\choose k}}{n}(-2x)^{n-k}
 {}_4 F_3
\begin{bmatrix}
\begin{matrix}
&\dfrac{{-}n{+}k}{2},\dfrac{-n+k}{2}{+}1,\dfrac{{-}n{+}k}{2}{+}\dfrac 12,\dfrac{-n{+}k}{2}{+}\dfrac 12\:\\
&{-}n{+}1,k{+}1,{-}n{+}1{+}k
\end{matrix} \:
\bigg{\vert \frac{4}{{x}^{2}}}
\end{bmatrix}
 =\\= \sum_{i=0}^{\left[\frac{n-k}{2} \right]} {(-2)^{n-(k+2i)}}  \frac{1}{n-i} \binom{n-i}{k+i} {n-k-i-1 \choose i} x^{n-(k+2i)}.
\end{gather*}
\end{proof}

\section{The kernel of the Chebyshev  derivation  of the second kind}

The $k$-th derivative of Chebyshev polinomial of the second kind has the form (see \cite{BI2})

\begin{gather*}
D^k_\mathcal{U}(x_n)={2^k} \sum_{i=0}^{\left[\frac{n-k}{2} \right]}  (n-i)^{ \underline{k-1}}  {k+i-1 \choose k-1} \left( n-k-2i+1 \right) x_{n-k-2i}.
\end{gather*}

Since  $D_\mathcal{U}\left(-\displaystyle  \frac{x_1}{2x_0} \right)=-1, $  we put $\lambda=- \displaystyle  \frac{x_1}{2x_0}$ and find the Diximier map:
\begin{multline*}
\sigma(x_n)=\sum_{k=0}^{n}D^k_\mathcal{U}(x_n) \frac{\lambda^k}{k!}=x_n +\\
+ \sum_{k=1}^{n} \frac{\lambda^k}{k!} {2^k}  \sum_{i=0}^{\left[ \frac{n-k}{2} \right]}  (n-i)^{ \underline{k-1}}   {k+i-1 \choose k-1}  \left( n-k-2i+1 \right) x_{n-k-2i}. 
\end{multline*}

Replacing  $\lambda$ by $-\displaystyle  \frac{x_1}{2x_0} $, we obtain, after   simplifying:
\begin{multline*}
x_0^{n-1} \sigma(x_n)=  x_n x_0^{n-1}+\\
+\sum_{k=1}^{n} \frac{(-1)^k}{k!} \sum_{i=0}^{\left[\frac{n-k}{2} \right]}(n-i-1)^{ \underline{k-1}}  {k+i-1 \choose k-1} \left( n-k-2i+1 \right) x_{n-k-2i}x_1^k x_0^{n-1-k}.
\end{multline*}

Thus, we prove the following
\begin{theorem}\label{t4} The kernel of the derivation $D_\mathcal{T}$ generating by the Cayley elements of the Chebyshev second kind derivation has the following form:
\begin{multline*}
C_{\mathcal{U}}(x_0,x_1, \ldots, x_n)={x_n}x_0^{n-1}+\\
+\sum_{k=1}^{n} \frac{(-1)^k}{k!} \sum_{i=0}^{\left[\frac{n-k}{2} \right]}(n-i-1)^{ \underline{k-1}}  {k{+}i{-}1 \choose k-1} \left( n{-}k{-}2i{+}1 \right) x_{n-k-2i}x_1^k x_0^{n-1-k}.
\end{multline*}
 
\end{theorem}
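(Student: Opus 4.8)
The plan is to mirror, step for step, the derivation carried out for $D_{\mathcal T}$ in Theorem~\ref{t1}, specializing each ingredient to the second-kind operator. First I would observe that $D_{\mathcal U}$ is triangular, since $D_{\mathcal U}(x_n)\in\mathbb{C}[x_0,\ldots,x_{n-1}]$ for every $n$, and hence locally nilpotent by the remark in Section~\ref{secondSec}; in particular $D_{\mathcal U}^{\,n+1}(x_n)=0$, so every series written below is really a finite sum. This places us precisely in the setting of the kernel description~(3) taken from \cite{BI7}.

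The second step is to exhibit a local slice, i.e.\ an element $\lambda$ of the field of fractions with $D_{\mathcal U}(\lambda)=-1$. Choosing $h=x_1$, we have $D_{\mathcal U}(x_1)=2x_0$ and $D_{\mathcal U}^2(x_1)=D_{\mathcal U}(2x_0)=0$, so the quotient rule gives $D_{\mathcal U}\!\left(\tfrac{x_1}{x_0}\right)=\tfrac{2x_0\cdot x_0}{x_0^2}=2$; thus $\lambda=-\tfrac{h}{D_{\mathcal U}(h)}=-\tfrac{x_1}{2x_0}$ satisfies $D_{\mathcal U}(\lambda)=-1$. Formula~(3) then guarantees that the Dixmier image $\sigma(x_n)=\sum_{k\ge 0}D_{\mathcal U}^k(x_n)\tfrac{\lambda^k}{k!}$ belongs to $\ker D_{\mathcal U}$ over the fraction field.

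Since the closed form of $D_{\mathcal U}^k(x_n)$ from \cite{BI2} is already available, the remaining computation is purely mechanical: substitute that expression into $\sigma(x_n)$, replace $\lambda$ by $-\tfrac{x_1}{2x_0}$, and note that the factor $2^k$ carried by the derivative cancels the $2^{-k}$ coming from $\lambda^k$, leaving the clean coefficient $\tfrac{(-1)^k}{k!}\,\tfrac{x_1^k}{x_0^k}$. Multiplying through by $x_0^{\,n-1}$ produces the stated polynomial $C_{\mathcal U}$. Because $D_{\mathcal U}(x_0)=0$, the element $x_0^{\,n-1}\sigma(x_n)$ is still annihilated by $D_{\mathcal U}$, and intersecting with $\mathbb{C}[x_0,\ldots,x_n]$ as prescribed by~(3) identifies it as the genuine Cayley element.

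The one point that is not merely formal — and the step I would verify most carefully — is that $x_0^{\,n-1}\sigma(x_n)$ is an honest polynomial and not a rational function. A negative power of $x_0$ in $x_1^k x_0^{\,n-1-k}$ can arise only when $k=n$ (and, more generally, only in the diagonal term $n-k-2i=0$); but in exactly that situation the surviving variable is $x_{n-k-2i}=x_0$, which supplies the missing factor and restores polynomiality. Confirming this cancellation, while keeping the falling factorials and binomial coefficients correctly indexed through the substitution, is the only place an error could slip in, and it can be checked directly for small $n$ against the operator table of Section~\ref{secondSec}.
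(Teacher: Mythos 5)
Your proof is correct and is essentially the paper's own argument: the same slice $\lambda=-\tfrac{x_1}{2x_0}$, the same closed form for $D_{\mathcal U}^k(x_n)$ from \cite{BI2} substituted into the Dixmier map, and the same multiplication by $x_0^{n-1}$, with your added checks (triangularity hence local nilpotence, the quotient-rule verification $D_{\mathcal U}(\lambda)=-1$, and polynomiality of $x_0^{n-1}\sigma(x_n)$) merely making explicit what the paper leaves implicit. Note only that the mechanical substitution you describe yields the factor $(n-i)^{\underline{k-1}}$, not the $(n-i-1)^{\underline{k-1}}$ printed in the statement of Theorem~\ref{t4}; this is a typo in the paper itself, since the derivative formula it starts from and the identity $(i)$ for $U_n$ that it later derives from this theorem both carry $(n-i)^{\underline{k-1}}$.
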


The first few Cayley elements are:
\begin{align*}
&C_\mathcal{U}(x_0,x_1, x_2)=-{x_{{1}}}^{2}+x_{{2}}x_{{0}},\\
&C_\mathcal{U}(x_0,x_1, x_2,x_3)=2\,{x_{{1}}}^{3}-3\,{x_{{1}}x_{{2}}x_{{0}}}-\,{x_{{0}}}^{2}x_{{1}}+x_{{3}}
{x_{{0}}}^{2},\\
&C_\mathcal{U}(x_0,x_1, x_2,x_3,x_4)=-3\,{x_{{1}}}^{4}+6\,{x_{{1}}}^{2}x_{{2}}x_{{0}}+{x_{{0}}}^{2}{x_{{1}}
}^{2}-4\,x_{{1}}x_{{3}}{x_{{0}}}^{2}+x_{{4}}{x_{{0}}}^{3},\\
&C_\mathcal{U}(x_0,x_1, x_2,x_3,x_4,x_5)=4\,{x_{{1}}}^{5}-10\,{x_{{1}}}^{3}x_{{2}}x_{{0}}+
10\,{x_{{1}}}^{2}x_{{3}}{x_{{0}}}^{2}+2\,{x_{{0}}}^{2}x_{{1}}{x_{{1}}}^{3}-\\&-5\,x_{{1}}x_{{4}}{x_{{0
}}}^{3}-3\,{x_{{0}}}^{3}x_{{1}}x_{{2}}-\,{x_{{0}}}^{4}x_{{1}}+\,{x_
{{0}}}^{4}x_{{5}}.
\end{align*} 
Going back to the Chebyshev polynomials  we get  the following statement.

\begin{theorem}  

For any arbitrary $n \in \mathbb{N},$ the following identities hold

$$
\begin{array}{ll}
(i)& \displaystyle 
U_n(x)+ \sum_{k=1}^{n} \frac{(- U_{1}(x))^k }{k} \left(  \sum_{i=0}^{\left[\frac{n-k}{2} \right]}  \binom{n-i}{k-1} {k{+}i{-}1 \choose k{-}1}  \left( n{-}k{-}2i{+}1 \right)U_{n-k-2i}(x) \right) 
=\\&=\displaystyle \cos \frac{\pi n}{2},\\
(ii)& \displaystyle 
\sum_{k=0}^n \left( \sum_{i=0}^{\left[\frac{n-k}{2} \right]} {(-1)^{n-(k+2i)}}  \frac{k+1}{i+k+1} \binom{n-i}{k+i} {n-k-i-1 \choose i} U_1(x)^{n-(k+2i)} \right) U_k(x)
=\\&=\displaystyle \cos \frac{\pi n}{2},\\
(iii)& \displaystyle 
(n+1){P^{\left(\frac{1}{2},\frac{1}{2}\right)}_{n}(x)}+\sum_{k=1}^{n} \frac {(-x)^k (n+k+1)!}{ k! 2^k  n!}   {P^{\left(k+\frac{1}{2},k+\frac{1}{2}\right)}_{n-k}(x)}  =  
 {n+\frac{1}{2} \choose n}  \cos \frac{\pi n}{2},
\end{array}
$$
$$
\text{ where} \:{P^{\left(\alpha,\beta \right)}_{n}(x)}\text {is} \:\textit{the Jacobi} \:\textit{polynomial}\: such \: that  \:deg P^{\left(\alpha,\beta \right)}_{n}(x)=n.
$$
\end{theorem}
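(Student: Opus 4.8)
The plan is to run, for the second kind, the same three–step scheme that produced the first–kind identities: by Theorem~\ref{t4} the Cayley element $C_{\mathcal{U}}$ lies in $\ker D_{\mathcal{U}}$, so by the construction of Section~\ref{secondSec} the substitution $x_i\mapsto U_i(x)$ turns it into a quantity that must be constant in $x$, and each of (i)--(iii) is just one way of recording that constant.

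For (i) I would substitute $x_i=U_i(x)$ directly into $C_{\mathcal{U}}(x_0,\dots,x_n)$. Since $U_0(x)=1$, every power $x_0^{\,n-1-k}$ becomes $1$, and rewriting the coefficient through $\frac{(n-i)^{\underline{k-1}}}{k!}=\frac1k\binom{n-i}{k-1}$ identifies the result with the left-hand side of (i). As this expression is constant, I evaluate it at $x=0$; because $U_1(0)=0$, every summand carrying $U_1(x)^k$ with $k\ge 1$ drops out, so the constant equals $U_n(0)$. Finally $U_n(0)=\cos\frac{\pi n}{2}$ follows from $U_n(\cos\theta)=\sin\bigl((n+1)\theta\bigr)/\sin\theta$ at $\theta=\pi/2$ (or from the explicit polynomial form), which is the claimed right-hand side.

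For (ii) I would apply to (i) the summation–interchange rule $\sum_{k}\sum_{i}a_{k,i}\,x_{n-k-2i}=\sum_k\bigl(\sum_i a_{n-(k+2i),i}\bigr)x_k$ already used for the first kind, so that the coefficient polynomial migrates from $U_{n-k-2i}$ onto $U_k$. Under the shift $k\mapsto n-(k+2i)$ the factor $(n-k-2i+1)$ becomes $k+1$; then the symmetries $\binom{n-k-i-1}{n-k-2i-1}=\binom{n-k-i-1}{i}$ and $\binom{n-i}{n-k-2i-1}=\binom{n-i}{k+i+1}$, together with the collapse $\frac{1}{n-k-2i}\binom{n-i}{k+i+1}=\frac{1}{k+i+1}\binom{n-i}{k+i}$, turn the leading fraction into the $\frac{k+1}{i+k+1}$ of (ii), while the constant is unchanged. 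For (iii) I would instead return to the Diximier map. Writing $\sigma(x_n)=\sum_{k}D_{\mathcal{U}}^{k}(x_n)\frac{\lambda^k}{k!}$ with $\lambda=-x_1/(2x_0)$, the substitution $x_i\mapsto U_i(x)$ sends $D_{\mathcal{U}}^{k}(x_n)\mapsto \frac{d^k}{dx^k}U_n(x)$ and $\lambda\mapsto -U_1(x)/\bigl(2U_0(x)\bigr)=-x$, so the kernel relation becomes the Taylor identity $\sum_{k=0}^{n}\frac{(-x)^k}{k!}\frac{d^k}{dx^k}U_n(x)=U_n(0)=\cos\frac{\pi n}{2}$. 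Substituting $U_n(x)=(n+1)P^{(1/2,1/2)}_n(x)/P^{(1/2,1/2)}_n(1)$ and the Jacobi derivative formula gives $\frac{d^k}{dx^k}U_n(x)=\frac{n+1}{P^{(1/2,1/2)}_n(1)}\frac{(n+k+1)!}{2^k(n+1)!}P^{(k+1/2,k+1/2)}_{n-k}(x)$; multiplying through by $P^{(1/2,1/2)}_n(1)=\binom{n+\frac{1}{2}}{n}$ then yields the leading $(n+1)P^{(1/2,1/2)}_n(x)$ from $k=0$, the general coefficient $\frac{(n+k+1)!}{k!\,2^k n!}$, and the right-hand side $\binom{n+\frac{1}{2}}{n}\cos\frac{\pi n}{2}$.

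The main obstacle is purely computational bookkeeping. The delicate point in (i) is using the correct falling factorial $(n-i)^{\underline{k-1}}$ in the coefficient, so that it matches $\frac1k\binom{n-i}{k-1}$, and the index shift with binomial simplification in (ii) is the easiest place to lose a factor. In (iii) the one thing that genuinely differs from the first-kind argument is the extra factor $n+1$ in the normalization $U_n=(n+1)P^{(1/2,1/2)}_n/P^{(1/2,1/2)}_n(1)$, which must be carried through; once these are tracked, each step is routine.
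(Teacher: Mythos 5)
Your proposal is correct and follows essentially the same route as the paper: substitute $x_i\mapsto U_i(x)$ into the Cayley element of Theorem~\ref{t4} and evaluate the constant at $x=0$ for (i), interchange the order of summation with binomial-coefficient manipulations for (ii), and specialize to Jacobi polynomials via the derivative formula for (iii). The only cosmetic differences are that you perform the binomial simplification after the summation interchange rather than before, and you make explicit the Taylor-expansion interpretation of the Dixmier map in (iii), which the paper leaves implicit.
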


\begin{proof}
$(i)$ By Theorem~\ref{t4} we have \begin{gather*}
U_n(x)+\sum_{k=1}^{n} \frac {(-1)^k}{k!} \sum_{i=0}^{\left[\frac{n-k}{2} \right]} (n-i)^{ \underline{k-1}} {k{+}i{-}1 \choose k{-}1}\left( n{-}k{-}2i{+}1 \right)U_{n-k-2i}(x){U_{1}(x)}^k =t,
\end{gather*}
for some constant $t$.

Using the same method as in the case of the Chebyshev first kind derivation, we find the unknown constant $t:$ 
$$
t=U_n(0)=\cos \frac{\pi n}{2}.
$$

$(ii)$  
With the property of binomial coefficients 
$$
 {n{-}i \choose k{-}1}=\frac{k}{ n{-}k{-}i{+}1}  {n{-}i \choose k},
$$
we obtain 
\begin{gather*}U_n(x)+ \sum_{k=1}^{n} \frac{{(- 1)}^k }{k} \left(  \sum_{i=0}^{\left[\frac{n-k}{2} \right]}  \binom{n{-}i}{k{-}1} {k{+}i{-}1 \choose k{-}1}  \left( n{-}k{-}2i{+}1 \right)U_{n-k-2i}(x) {(- U_{1}(x))}^k \right) =\\
=U_n(x)
+\sum_{k=1}^{n}  \left(  \sum_{i=0}^{\left[\frac{n-k}{2} \right]}\frac{{(- 1)}^k \left( n{-}k{-}2i{+}1 \right)}{ n{-}k{-}i{+}1 } \binom{n{-}i}{k} {k{+}i{-}1 \choose k{-}1}  U_{n-k-2i}(x) {(- U_{1}(x))}^k \right).
\end{gather*}

After changing the order of summation  \cite{BI8}

\begin{gather*}
\sum_{k=0}^n \sum_{i=0}^{\left[\frac{n-k}{2} \right]} a_{k,i} x_{n-k-2i} =\sum_{k=0}^n \left( \sum_{i=0}^{\left[\frac{n-k}{2} \right]} a_{n-(k+2i),i} \right) x_k,
\end{gather*}

and using the symmetry of binomial coefficients, we rewrite the identity $(i)$  as following 
\begin{gather*}\label{5}
\sum_{k=0}^{n}  \left(  \sum_{i=0}^{\left[\frac{n-k}{2} \right]}\frac{{{(- 1)}^{n-2i-k}(k{+}1) }}{ i{+}k{+}1 } \binom{n-i}{k+i} {n{-}k{-}i{-}1 \choose i}   {U_{1}(x)}^{n-2i-k} \right)U_{k}(x).
\end{gather*}

 $(iii)$ 
 Since the Chebyshev polynomials of the second kind are a special case of the Jacobi polynomials \cite{BI4} $P_n^{(\alpha,\beta)}$ with $\alpha=\beta=1/2$,
$$
U_n(x)=\frac{P^{\frac{1}{2},\frac{1}{2}}_n(x)}{P^{\left(\frac{1}{2},\frac{1}{2}\right)}_n(1)}=\frac{(n+1)   }{{n+\frac{1}{2} \choose n} }P^{\left(\frac{1}{2},\frac{1}{2}\right)}_n(x),
$$

we obtain the $k$-th Chebyshev polynomial derivative in the terms of the Jacobi  polynomials

\begin{gather*}
\frac{d^k}{dx^k} U_n(x)= \frac{(n+k+1)!}{2^k(n+1)!}\frac{(n+1)   }{{n+\frac{1}{2} \choose n} }
{P^{\left(k+\frac{1}{2},k+\frac{1}{2}\right)}_{n-k}(x)}=\\
=\frac{(n+k+1)!}{2^k n!{ n+\frac{1}{2} \choose n} }
{P^{\left(k+\frac{1}{2},k+\frac{1}{2}\right)}_{n-k}(x)}.
\end{gather*}

Thus, with $U_1(x)=2x,$ we obtain the identity for the special case of the Jacobi polynomials 
\begin{gather*}
 (n+1){P^{\left(\frac{1}{2},\frac{1}{2}\right)}_{n}(x)}+\sum_{k=1}^{n} \frac {(-x)^k (n+k+1)!}{ k! 2^k  n!}   {P^{\left(k+\frac{1}{2},k+\frac{1}{2}\right)}_{n-k}(x)}  =  
 {n+\frac{1}{2} \choose n}  \cos \frac{\pi n}{2}.
\end{gather*}
\end{proof}

If we rewrite the $(ii)$ identity in the terms of hypergeometric function, we obtain the following

\begin{theorem}
\begin{multline*}
\sum_{k=0}^n {n\choose k}(-2x)^{n-k}\,{}_4 F_3
\begin{bmatrix}
\begin{matrix}
&\dfrac{{-}n{+}k}{2},\dfrac{-n+k}{2}{+}1,\dfrac{{-}n{+}k}{2}{+}\dfrac 12,\dfrac{-n{+}k}{2}{+}\dfrac 12\:\\
&{-}n,k{+}2,{-}n{+}1{+}k
\end{matrix} \:
\bigg{\vert \frac{4}{{x}^{2}}}
\end{bmatrix} U_k(x)=\\
=
\cos \frac{\pi n}{2}.
\end{multline*}
\end{theorem}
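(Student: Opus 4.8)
The plan is to mirror the proof of the first-kind hypergeometric theorem given just above, now starting from identity $(ii)$ of the second-kind Chebyshev theorem. Setting $U_1(x)=2x$ there, the inner summand over $i$ becomes
$$b_i=(-1)^{n-(k+2i)}\,\frac{k+1}{i+k+1}\binom{n-i}{k+i}\binom{n-k-i-1}{i}(2x)^{n-(k+2i)},$$
and the identity reads $\sum_{k=0}^n\bigl(\sum_{i=0}^{[(n-k)/2]}b_i\bigr)U_k(x)=\cos\frac{\pi n}{2}$. For each fixed $k$ it therefore suffices to recognise the finite inner sum $\sum_i b_i$ as a single terminating ${}_4F_3$, exactly as was done for $T_k(x)$.

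First I would evaluate the leading term. At $i=0$ the two binomials reduce to $\binom{n}{k}$ and $1$ and the rational factor $\frac{k+1}{k+1}$ to $1$, so $b_0=\binom{n}{k}(-2x)^{n-k}$, which is precisely the prefactor multiplying the ${}_4F_3$ in the statement. Next I would form the term ratio $b_{i+1}/b_i$ and simplify; I expect it to collapse to
$$\frac{b_{i+1}}{b_i}=\frac{(k+2i-n)(k+2i-n+1)^2(k+2i-n+2)}{4\,(i-n)(i+k+2)(i-n+k+1)(i+1)\,x^2},$$
which is formally identical to the first-kind computation but with the lower parameters shifted. Halving each of the four linear numerator factors yields the upper parameters $\tfrac{-n+k}{2},\tfrac{-n+k}{2}+1,\tfrac{-n+k}{2}+\tfrac12,\tfrac{-n+k}{2}+\tfrac12$; the denominator factors $(i-n),(i+k+2),(i-n+k+1)$ give the lower parameters $-n,\,k+2,\,-n+1+k$, the factor $(i+1)$ is the factorial built into the series, and the residual constant is exactly the argument $4/x^2$. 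Matching $b_0$ and this ratio against the definition of the generalized hypergeometric series then identifies $\sum_i b_i$ with $\binom{n}{k}(-2x)^{n-k}\,{}_4F_3[\dots\mid 4/x^2]$, and substituting into identity $(ii)$ produces the claimed formula.

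To pass from the finite inner sum to the infinite series I would invoke termination: when $n-k$ is even the upper parameter $\tfrac{-n+k}{2}$ is a nonpositive integer, and when $n-k$ is odd the repeated parameter $\tfrac{-n+k}{2}+\tfrac12$ is, so $b_i=0$ for $i>[(n-k)/2]$ and the infinite ${}_4F_3$ collapses to the finite sum. The subtle point here — the only genuine difference from the first-kind case — is that $-n$ now appears as a \emph{lower} parameter, and a generalized hypergeometric function with a negative integer in the denominator is a priori undefined, since the $i=n$ term would divide by zero. The resolution is exactly the termination just noted: the top parameter forces the series to stop at $i=[(n-k)/2]$, which is strictly smaller than $n$ for every $n\ge 1$, so the offending denominator factor $(i-n)$ never vanishes within the range of nonzero terms and the expression is well defined. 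Once this observation is recorded, the remainder is purely the sign- and factor-bookkeeping in the ratio computation, entirely parallel to the preceding theorem, and I expect that verification — rather than any conceptual difficulty — to be the main labour.
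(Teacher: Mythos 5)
Your proposal follows essentially the same route as the paper's own proof: the same leading term $b_0=\binom{n}{k}(-2x)^{n-k}$, the same term ratio $b_{i+1}/b_i$ with numerator factors halved into the four upper parameters and denominator factors giving $-n,\,k+2,\,-n+1+k$, and the same identification of the inner sum of identity $(ii)$ with the terminating ${}_4F_3$ of argument $4/x^2$. Your additional remark that the negative-integer lower parameter $-n$ is harmless because the series terminates at $i=\left[\frac{n-k}{2}\right]$, strictly before the pole at $i=n+1$, makes explicit a well-definedness point the paper passes over in silence, but it does not alter the argument.
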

\begin{proof}

Denoting  the expression under the summation   sign  by $a_i$ (with  $U_1(x)=2x$)  
$$
a_i= \frac{{{(- 1)}^{n-2i-k}(k{+}1) }}{ i{+}k{+}1 } \binom{n-i}{k+i} {n{-}k{-}i{-}1 \choose i} (2x)^{n-(k+2i)},
$$
and finding the ratio of $a_{i+1}$ and $a_i,$ after simplifying, we  obtain
\begin{gather*}
\frac{a_{i+1}}{a_{i}}={\frac { \left( k+2\,i+1-n \right) ^{2} \left( -n+k+2\,i \right) 
 \left( -n+k+2\,i+2 \right) }{  \left( -n+i \right)  \left( k+i+2
 \right)  \left( -n+k+i+1 \right)  \left( i+1 \right) 4{x}^{2}}},
\end{gather*}
with
$$
a_0={n\choose k}(-2x)^{n-k}.
$$

Using the corresponding property of the generalized hypergeometric series \cite{BI6}, we have
\begin{gather*}
\sum_{i=0}^{\infty}\frac{{{(- 1)}^{n-2i-k}(k{+}1) }}{ i{+}k{+}1 } \binom{n-i}{k+i} {n{-}k{-}i{-}1 \choose i} (2x)^{n-(k+2i)}=\\=
 {n\choose k}(-2x)^{n-k}{}_4 F_3
\begin{bmatrix}
\begin{matrix}
&\dfrac{{-}n{+}k}{2},\dfrac{-n+k}{2}{+}1,\dfrac{{-}n{+}k}{2}{+}\dfrac 12,\dfrac{-n{+}k}{2}{+}\dfrac 12\:\\
&{-}n,k{+}2,{-}n{+}1{+}k
\end{matrix} \:
\bigg{\vert \frac{4}{{x}^{2}}}
\end{bmatrix}
=\\= \sum_{i=0}^{\left[\frac{n-k}{2} \right]} \frac{{{(- 1)}^{n-2i-k}(k{+}1) }}{ i{+}k{+}1 } \binom{n-i}{k+i} {n{-}k{-}i{-}1 \choose i} (2x)^{n-(k+2i)}.
\end{gather*}
\end{proof}


\end{document}